%% file: main.tex
\documentclass[11pt]{article}

\input{preamble}

\begin{document}

\title{The statistical threshold for planted matchings and spanning trees} 
\author{Louigi Addario-Berry\thanks{Department of Mathematics and Statistics, McGill University, Montr\'eal, Canada}
\and
Omer Angel\thanks{Department of Mathematics,
University of British Columbia, Vancouver, Canada}
\and
Gábor Lugosi\thanks{Department of Economics and Business, 
Pompeu  Fabra University, Barcelona, Spain;
ICREA, Pg. Lluís Companys 23, 08010 Barcelona, Spain;
Barcelona Graduate School of Economics}
\and
Miklós Z. Rácz\thanks{Department of Statistics and Data Science and Department of Computer Science, Northwestern University}
\and
Tselil Schramm\thanks{Department of Statistics, Stanford University}}

\maketitle

\begin{abstract} 
In this paper, we study the problem of detecting the presence of a planted perfect matching or spanning tree in an Erd\H{o}s--Rényi random graph. More precisely, we study the hypothesis testing problem where the statistician observes a graph on $n$ vertices. Under the null hypothesis, the graph is a realization of an Erd\H{o}s--Rényi random graph $G(n,q)$, while under the alternative hypothesis, the graph is the union of an 
Erd\H{o}s--Rényi random graph and a random perfect matching (or random spanning tree).
In order to avoid trivial detection by counting edges, we adjust the alternative hypothesis so that the expected number of edges under both distributions coincides. We prove that in both problems, when $q\gg n^{-1/2}$,
no test can perform better than random guessing, while for $q\ll n^{-1/2}$, there exist computationally efficient tests that guess correctly with high 
probability.
\end{abstract}

\input{plant}

\section{\bf Acknowledgements}

This material is based upon work supported in part by the National Science Foundation (NSF) under Grant No.~DMS-1928930, while the authors were in residence at the Simons Laufer Mathematical Sciences Institute (SLMath/MSRI) in Berkeley, California, participating in the Probability and Statistics of Discrete Structures program during Spring~2025.

\small 

\bibliographystyle{alpha}
\bibliography{eastereggs}

\appendix

\end{document}

%% file: preamble.tex
\usepackage{amsthm,amssymb,amsfonts,amsmath}
\usepackage[numbers, sort&compress]{natbib} 
\usepackage{subfigure, graphicx}
\usepackage[margin=2.75cm]{geometry}
\usepackage{setspace}
\usepackage{color}
\usepackage[normalem]{ulem}
\usepackage{mathtools}
\usepackage[dvipsnames]{xcolor}
\usepackage{hyperref}
\hypersetup{
    colorlinks=true, linkcolor=MidnightBlue,    
    citecolor=Sepia,           
    urlcolor=teal,             
}
\usepackage[nameinlink]{cleveref}

\providecommand{\R}{}

\providecommand{\N}{}

\renewcommand{\R}{\mathbb{R}}

\renewcommand{\N}{{\mathbb N}}

\newcommand{\e}{{\mathbf E}}

\newcommand{\va}{{\mathbf{Var}}}

\newcommand{\Ind}[1]{{\mathbf 1}_{[#1]}}

\newcommand{\dtv}{\mathrm{d}_{\mathrm{TV}}}

\newcommand{\bA}{\mathbf{A}} 
\newcommand{\bB}{\mathbf{B}} 
 
\newcommand{\bD}{\mathbf{D}}

\newcommand{\bG}{\mathbf{G}} 
\newcommand{\bH}{\mathbf{H}}

\newcommand{\bQ}{\mathbf{Q}}

\newcommand{\bT}{\mathbf{T}}

\newcommand{\bX}{\mathbf{X}} 
\newcommand{\bY}{\mathbf{Y}} 
\newcommand{\bZ}{\mathbf{Z}}

\newcommand{\bbG}{\mathbb{G}} 
\newcommand{\bbH}{\mathbb{H}}

\newcommand{\bbP}{\mathbb{P}} 
\newcommand{\bbQ}{\mathbb{Q}}

\providecommand{\ora}[1]{}
\renewcommand{\ora}[1]{\overrightarrow{#1}}
\DeclareRobustCommand{\SkipTocEntry}[5]{} 
\newcommand\defeq{\stackrel{\text{\tiny def}}{=}}

\reversemarginpar
\definecolor{clou}{rgb}{0.8,0.25,0.5125}

\crefname{ineq}{inequality}{inequalities}
\creflabelformat{ineq}{#2{\upshape(#1)}#3}

\newtheorem{thm}{Theorem}
\newtheorem{lem}[thm]{Lemma}

\newtheorem{fact}[thm]{Fact}
\newtheorem{claim}[thm]{Claim}

\numberwithin{equation}{section}
\numberwithin{thm}{section}

\graphicspath{ {./images/} }
\usepackage{wrapfig}

\newcommand{\Ber}{\mathrm{Ber}}
\newcommand{\Bin}{\mathrm{Bin}}
\newcommand{\Pois}{\mathrm{Pois}}

%% file: plant.tex
\section{Introduction}

The planted subgraph problem is a fundamental problem in combinatorial statistics that has received increasing attention, mostly because it is relatively simple, yet it possesses a wealth of intriguing and complex phenomena. Roughly speaking,  the planted subgraph problem asks whether one can detect or recover a hidden structured subgraph that has been embedded inside a large random graph.

More precisely, let $\bbG(n,p)$ denote the distribution of an Erd\H{o}s--R\'enyi random graph with $n$ vertices and edge probability $p$, and let $\bbH$ be a distribution over graphs on $n$ vertices with $e_H$ edges, representing the planted (or hidden) subgraphs. 
Consider sampling an $n$-vertex graph at random by first sampling a graph from $\bbG(n,p)$, then sampling $\bH \sim \bbH$, then adding a planted copy of $\bH$.
Let $\bbP$ ($\bbP$ for ``planted'') be the distribution on graphs induced by this sampling procedure.

In the \emph{detection problem}, one observes a graph that is either drawn from the planted distribution $\bbP$ or from the ``null'' distribution that is usually considered to be simply~$\bbG(n,p)$. One is asked to decide which of the two distributions the observed graph is generated by. 

In the early formulations of the problem (see Section \ref{sec:literature} below for references), $\bbH$ was chosen to be the uniform distribution over all cliques of size $k$ for some $k\ll n$.  The main, still unresolved, question is for what values of $k$ do there exist computationally efficient tests that identify the underlying distribution, with high probability, when $p=1/2$.

A closely related problem is that of \emph{recovery}, where one observes a graph drawn from the planted distribution $\bbP$, and the goal of the observer is to (partially or exactly) recover the planted subgraph $\bH$. 

In this paper we focus on the detection problem only. In particular, we consider planting either a perfect matching (assuming that $n$ is even) or a spanning tree. 
In other words, $\bbH$ is either the uniform distribution over all perfect matchings of $n$ vertices, or the uniform distribution over all  spanning trees. In the first case, $e_H=n/2$, while $e_H=n-1$ in the second. Since $e_H$ is linear in both cases, even when $p$ is bounded away from zero, one can detect the presence of a planted object simply by counting the edges of the observed graph. In order to avoid trivial detection by edge counting, we adjust the null hypothesis $\bbQ$ so that  $\bbP$ and $\bbQ$ have the same number of edges in expectation.
More precisely, we consider the hypothesis testing problem of distinguishing whether a graph is drawn from the planted distribution $\bbP$ defined above or the null distribution $\bbQ = \bbG(n,q)$, where $q \defeq p + (1-p)e_H/\binom{n}{2}$.

We consider the following standard formulation of the hypothesis testing problem. The statistician observes a graph $G$ on $n$ vertices. A test is a function $f:\{0,1\}^{\binom{n}{2}}\to \{0,1\}$, where $f(G)=0$ represents that the test guesses that $G$ is drawn from the null distribution $\bbQ$ and $f(G)=1$ means that the test guesses $\bbP$. The risk of a test $f$ is defined by the sum of the probabilities of the two types of error:
\[
   R(f) = \Pr_{\bG \sim \bbQ}[f(\bG)=1] + \Pr_{\bG \sim \bbP}[f(\bG)=0]~.
\]
It is well known that the optimal test minimizing the risk is the \emph{likelihood ratio test} defined by
\[
f^*(G) = 0 \ \text{ if and only if } \  L(G) < 1~,
\]
where
\[
L(G) 
\defeq \frac{d\bbP}{d\bbQ}(G)
\]
is the likelihood ratio. The risk of the optimal test is
\[
R(f^*) 
= 1- \frac{1}{2}
\e_{\bG \sim \bbQ} |L(\bG)-1|= 1-\dtv(\bbP,\bbQ)~,
\]
where $\dtv(\bbP,\bbQ)$ denotes the total variation distance. Thus, one can correctly detect the presence of a planted structure with high probability if and only if $\dtv(\bbP,\bbQ) \to 1$. 

The goal of the paper is to determine the value of $p$ (equivalently, of $q$) for which correct detection is possible, with high probability.

A standard tool for lower bounding $R(f^*)$ is the simple observation that, by the Cauchy-Schwarz inequality,
\[
R(f^*) \ge 1- \frac{1}{2}
\sqrt{\e_{\bG \sim \bbQ} (L(\bG)^2-1)}
= 1 -\frac{1}{2} \sqrt{\chi^2(\bbP,\bbQ)}~.
\]

Our main results establish that, in both problems considered, the high-probability detection threshold for $p$ is of the order of magnitude of $n^{-1/2}$.
More precisely, in both the planted perfect matching problem and the planted spanning tree problem, if $p= o(n^{-1/2})$, then $R(f^*) \to 0$, and if $p= \omega(n^{-1/2})$, then $R(f^*) \to 1$; 
see Theorems~\ref{thm:matching} and~\ref{thm:spanning_tree} below.

The order of magnitude of the threshold value is somewhat striking. 
On the one hand, a sparse and dispersed structure like a perfect matching (or a spanning tree) can be detected as soon as the random graph in which it is hidden has average degree $o(\sqrt{n})$. On the other hand, while without adjusting the edge probability of the null distribution, edge counting would make detection possible for any $p=o(1)$, after adjustment, no test can detect the planted structure when $p$ is of larger order than $n^{-1/2}$.
It is perhaps surprising that adjusting $p$ to hide the edge count has such a dramatic effect.

Our impossibility proofs rely on a careful bounding of the $\chi^2$ distance between $\bbP$ and~$\bbQ$. For the possibility results, we design simple tests that are able to tell the two distributions apart. Importantly, the tests are computationally efficient, and therefore there is no statistical/computational gap, unlike in many of the related planted subgraph problems.

\subsubsection*{Notation}
We use standard big-O notation, and we  write $o_n(1)$ to denote a function that goes to $0$ as $n \to \infty$.
Our convention is that random variables appear in boldface. 
We write $H \subset G$ to denote that $H$ is a subgraph of $G$.

\subsection{Related work}
\label{sec:literature}

The planted clique problem  \cite{MR1179827,MR1327775} has generated a massive amount of research, including on detection algorithms \cite{MR1662795,MR2735341,DeGuPe11,DeMo15}; 
statistical thresholds \cite{MR3664576,MR3670183,MR4751648,MR4441142}; and complexity lower bounds/reductions  \cite{brennan2018reducibility,brennan2020reducibility,MR4764820, MR4617369,MR2402475}.

There has been a substantial amount of work on recovery of planted perfect matchings in edge-weighted random graphs, 
where the weight distribution of the planted edges differs from that of the other edges \cite{MR4147946,MR4350971,MR4634336}; see also~\cite{pmlr-v291-gaudio25a}, which studies the recovery of a planted $k$-factor in a sparse Erd\H{o}s--R\'enyi random graph. The planting of subgraphs of varying size (rather than spanning subgraphs) has also been studied, notably in the very recent work~\cite{elimelech2025detecting}. 

The case when the planted subgraph is a tree was studied by Massouli{\'e}, Stephan, and Towsley \cite{MaStTo19}, who considered planting small trees in sparse Erd\H{o}s--Rényi graphs.

As far as we are aware, the only paper specifically focused on the planted spanning tree problem is  \cite{moharrami2025planted}, which studies the problem of recovering the planted spanning tree. 

The recent work \cite{MR4871715} studies all-or-nothing phenomena in planted graph detection in the Erd\H{o}s--R\'enyi setting, drawing connections to  Kahn--Kalai expectation thresholds.

The planted subgraph problem also has applications to secret-sharing schemes and other problems in cryptography \cite{MR4872490, MR4697144}. 

The distribution of counts of perfect matchings and spanning trees (and Hamiltonian cycles) in Erd\H{o}s-R\'enyi graphs $\bbG(n,p)$ were studied by Janson~\cite{janson1994numbers}. 
Janson showed that the counts follow a log-normal distribution so long as $p<1$ and $p = \Omega(n^{-1/2})$. 
To study the case $p \sim n^{-1/2}$, Janson studies a basis expansion of a reweighting of the matching count by a factor exponential in the number of edges.
Though he does not discuss it, this reweighting (as we will see in \Cref{sec:LR}) is equivalent to the likelihood ratio of the planted distribution to $\mathbb{G}(n,q')$ for some $q'$ (not necessarily corresponding to our edge-count-corrected null).

While we were finishing writing this manuscript, we learned of the very recent work by Wee and Mao~\cite{wee2025cluster}, who study planted matchings in a related but somewhat different setting.
In their work, they plant a matching whose size is random, with expectation bounded above by $\varepsilon n$ for some small $\varepsilon$. 
In this setting, they determine the detection threshold at $p = \Theta(n^{-1/2})$, and prove precise results at the threshold (in particular, log-normality of the likelihood ratio) using sophisticated cluster expansion methods (following \cite{janson1994numbers}). 
Their limitation on the expected size of the matching is required for the cluster expansion to converge. 
\subsection{Outline}
The rest of the paper is structured as follows. In Section \ref{sec:LR} we present properties of the likelihood ratio that turn out to be useful in the proofs of our impossibility results.
In Section \ref{sec:PM} we prove our result on the detection threshold for the planted perfect matching problem. In Section \ref{sec:ST} the analogous result for planted spanning trees is presented.

\section{Likelihood ratio for planted graphs}
\label{sec:LR}
Consider an experiment in which one samples $k$ graphs $\bH_1,\ldots,\bH_k$ independently from $\bbH$, and let $\mathrm{collisions}(\bH_1,\ldots,\bH_k)$ be the number of edges common to $\bH_1,\ldots,\bH_k$ (with multiplicity).
In this section we show that $k$th moments of the likelihood ratio $\frac{d\bbP}{d\bbQ}$ under $\bbQ$ can be understood by studying the moment generating function of $\mathrm{collisions}(\bH_1,\ldots,\bH_k)$.

For an $n$-vertex graph $G$ with $e_G$ edges, the likelihood ratio is given by the expression
\begin{align}
L(G) 
= \frac{d\bbP}{d\bbQ}(G) 
&= \frac{\Pr_{\bH \sim \bbH}[\bH \subset G] \cdot p^{e_G-e_H}(1-p)^{\binom{n}{2}-e_G}}{q^{e_G}(1-q)^{\binom{n}{2}-e_G}}\nonumber\\
&= \left(\frac{1-p}{1-q}\right)^{\binom{n}{2}}\left(\frac{\Pr_{\bH \sim \bbH}[\bH \subset G]}{\Pr_{\substack{\bH\sim\bbH\\ \bG \sim \bbG(n,p)}}[\bH \subset \bG]}\right) \left(\frac{p(1-q)}{q(1-p)}\right)^{e_G}.\label{eq:likelihood}
\end{align}

\newcommand{\deltanH}{\delta_{n}}
Writing $\deltanH = e_H / \binom{n}{2}$, simple algebraic manipulations allow us to write
\begin{align*}
    p &= \frac{q-\deltanH}{1-\deltanH}, &
    \frac{1-q}{1-p} &= 1-\deltanH, &
    \frac{p(1-q)}{q(1-p)} &=\frac{q-\deltanH}{q},
\end{align*}
and \Cref{eq:likelihood} above can be re-written as

\begin{align}
L(G) = \left(\frac{1}{1-\deltanH}\right)^{\binom{n}{2}}\left(\frac{\Pr_{\bH \sim \bbH}[\bH \subset G]}{\Pr_{\substack{\bH\sim\bbH\\ \bG \sim \bbG(n,p)}}[\bH \subset \bG]}\right) \left(1 - \frac{\deltanH}{q}\right)^{e_G}.\label{eq:likelihood-2}
\end{align}

\subsection{Likelihood ratio moments}

A change-of-measure allows us to set $k$th moments of the likelihood ratio $L(G)$ under $\bbQ$ proportional to moments of $\Pr_{\bH \sim \bbH}[\bH \subset \bG]$ for graphs $\bG$ from a product measure $\bbQ_k$.
In particular, continuing from the expression \cref{eq:likelihood-2}, defining $r_k \defeq (1-\deltanH/q)^k$,
\begin{align}
\e_{\bG \sim \bbQ} L(\bG)^k
&= \left(\frac{1}{1-\deltanH}\right)^{k\binom{n}{2}} \frac{\e_{\bG \sim \bbQ}\left[\Pr_{\bH \sim \bbH}[\bH \subset \bG]^k r_k^{e_{\bG}}\right]}{\Pr_{\substack{\bH \sim \bbH\\\bG \sim \bbG(n,p)}}[\bH \subset \bG]^k}\nonumber \\
&= \left(\frac{1}{1-\deltanH}\right)^{k\binom{n}{2}} \frac{Z_k \cdot \e_{\bG \sim \bbG(n,q_k)}\left[\Pr_{\bH \sim \bbH}[\bH\subset\bG]^k\right]}{\e_{\bG \sim \bbG(n,p)}\left[\Pr_{\bH \sim \bbH}[\bH \subset\bG]\right]^k}, \label{eq:homs}
\end{align}
with
\begin{align}
Z_k \defeq \left(1-q + q r_k\right)^{\binom{n}{2}}, \qquad \text{and} \qquad
q_k \defeq \frac{q r_k}{1-q + q r_k}.
\end{align}

We can further simplify the right-hand side of \cref{eq:homs}. 
For any $m \in \N$ and $a \in [0,1]$,
\[
\e_{\bG \sim \bbG(n,a)} \left[ \Pr_{\bH \sim \bbH}[\bH\subset\bG]^m \right]
= \e_{\bH_1,\ldots,\bH_m \sim \bbH} \left[ a^{|E(\bH_1 \cup \cdots \cup \bH_m)|} \right],
\]
where $\bH_1,\ldots,\bH_m \sim \bbH$ are mutually independent, and 
 $E(\bH_1 \cup \cdots \cup \bH_m)$ is the edge set of the \emph{simple} graph $\bigcup_{i=1}^m \bH_i$.
Applying this in the numerator and denominator of \cref{eq:homs},
\begin{align}
\frac{\e_{\bG \sim \bbG(n,q_k)}\left[\Pr_{\bH \sim \bbH}[\bH\subset\bG]^k\right]}{\e_{\bG \sim \bbG(n,p)}[\Pr_{\bH\sim \bbH}[\bH\subset\bG]]^k}
&= \left(\frac{q_k}{p}\right)^{k e_H} \e_{\bH_1,\ldots,\bH_k \sim \bbH} \left[q_k^{-\mathrm{collisions}(\bH_1,\ldots,\bH_k)}\right],\label{eq:coll}
\end{align}
where 
$\mathrm{collisions}(\bH_1,\ldots,\bH_k) 
\defeq k e_{H} - |E(\bH_1 \cup \cdots \cup \bH_k)|$ 
is the minimum number of arcs one must remove from the multiset $\bigcup_{i=1}^m E(\bH_i)$ in order to obtain a set.

Putting \cref{eq:homs,eq:coll} together,
\begin{align}
\e_{\bG \sim \bbQ} L(\bG)^k
&= 
\left(\frac{1}{1-\deltanH}\right)^{k\binom{n}{2}}\left(\frac{q_k}{p}\right)^{k e_H} Z_k \cdot  \e_{\bH_1,\ldots,\bH_k \sim \bbH} \left[q_k^{-\mathrm{collisions}(\bH_1,\ldots,\bH_k)}\right].\label{eq:coll-simple}
\end{align}

\section{Planting a perfect matching}
\label{sec:PM}

In this section we address the case when $n \in \N$ is even, and $\bbP$ is $\bbG(n,p)$ with a planted perfect matching, captured by choosing $\bbH$ to be the uniform distribution over perfect matchings of the complete graph $K_n$.
The main result of this section is the following.
\begin{thm}\label{thm:matching}
If $\bbP$ is $\bbG(n,p)$ with a planted perfect matching and $\bbQ$ is $\bbG(n,q)$ for $q = p + \frac{1-p}{n-1}$, then
\[
\lim_{n \to \infty} \dtv(\bbP,\bbQ) = \begin{cases}
0 & \text{when }p = \omega(n^{-1/2}),\\
1 & \text{when }p = o(n^{-1/2}).
\end{cases}
\]
\end{thm}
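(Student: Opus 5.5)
The proof has two parts: the impossibility result via a second-moment ($\chi^2$) computation using \cref{eq:coll-simple} with $k=2$, and the detection result via a degree-variance statistic.

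\textbf{Impossibility ($p=\omega(n^{-1/2})$).} It suffices to show $\e_{\bG\sim\bbQ}L(\bG)^2 = 1+o(1)$, since then $\chi^2(\bbP,\bbQ)\to 0$. Apply \cref{eq:coll-simple} with $k=2$, $e_H=n/2$ and $\deltanH = 1/(n-1)$. For two independent uniform perfect matchings $\bH_1,\bH_2$, let $\mathbf C=\mathrm{collisions}(\bH_1,\bH_2)$ be the number of common edges. I would compute $\e[x^{\mathbf C}]$ through factorial moments:
\[
\e[x^{\mathbf C}]=\sum_{j\ge 0}\e\binom{\mathbf C}{j}(x-1)^j, \qquad \e\binom{\mathbf C}{j}=\binom{n/2}{j}\frac{(n-2j-1)!!}{(n-1)!!}=\frac{2^{-j}}{j!}\prod_{i<j}\frac{n-2i}{n-2i-1}.
\]
Take $x=1/q_2\approx 1/p$. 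The sum is dominated by $j=O(1/p)=o(\sqrt n)$, and there the product equals $1+O(j^2/n)\cdot(1/j)$, i.e.\ $\exp(O(j/n))$. The tail $j\gg 1/p$ is negligible by comparison with a Poisson tail. This should give
\[
\e[q_2^{-\mathbf C}]=\exp\Bigl(\tfrac{1}{2}(q_2^{-1}-1)\Bigr)\,(1+o(1)).
\]
Everything then reduces to showing that the deterministic prefactor
\[
(1-\deltanH)^{-2\binom n2}\,(q_2/p)^{n}\,Z_2
\]
equals $\exp\bigl(-\tfrac12(q_2^{-1}-1)+o(1)\bigr)$. I would take logarithms, write $r_2=(1-\deltanH/q)^2$, and expand $\log Z_2=\binom n2\log\bigl(1-q+qr_2\bigr)$ and $n\log(q_2/p)$ to second order in $\deltanH/q$. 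The first-order terms (of size $n$) cancel because of the edge-count adjustment $q=p+(1-p)\deltanH$. The second-order terms are of size $n^2\deltanH^2/q\asymp 1/q$ and must cancel against $\tfrac1{2q_2}$. What remains should be $O(1/(nq^2))+O(1/n)$, which is $o(1)$ exactly when $q\gg n^{-1/2}$.

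\textbf{Main obstacle.} This bookkeeping is the crux: several terms of order $1/p$, which may be as large as $n^{1/2}$, must cancel exactly. I would therefore carry out the expansion with explicit error terms, treating $p\to$ constant and $p\to 0$ uniformly via the exact identities relating $p$, $q$ and $\deltanH$ from \Cref{sec:LR}.

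\textbf{Detection ($p=o(n^{-1/2})$).} Under $\bbP$ each degree is $1+\Bin(n-2,p)$, with variance about $np(1-p)$. Under $\bbQ$ each degree is $\Bin(n-1,q)$, with variance about $nq(1-q)\approx np+1-2p+O(1/n)$. So the degree variance differs by a constant, roughly $1$. I would use the empirical degree variance
\[
T=\frac1n\sum_i d_i^2-\Bigl(\frac{2e_G}{n}\Bigr)^2,
\]
which is a polynomial in the count of 2-paths $\sum_i\binom{d_i}{2}$ and the edge count $e_G$. Under $\bbQ$ its mean is $(n-1)q(1-q)+O(q)$, while under $\bbP$ it is $\approx(n-2)p(1-p)+O(p)$; the gap is $1-o(1)$ when $p\to0$. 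I would compute the variance of $T$ under both models by expanding into edge indicators (plus the matching indicator under $\bbP$). The leading fluctuation is of order $\sqrt{n\cdot(np)^2}/n=p\sqrt n$, plus lower-order terms, and this is $o(1)$. By Chebyshev, thresholding $T$ at the midpoint of the two means therefore errs with probability $o(1)$ under both hypotheses, so $\dtv(\bbP,\bbQ)\to1$. The variance computation is routine but requires care that the covariances between different vertices' degrees (through shared edges) do not contribute more than $O(p^2 n)$.
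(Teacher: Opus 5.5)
Your proposal is correct, and it departs from the paper in two places.

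\textbf{Impossibility.} You use the same $\chi^2$ route through \cref{eq:coll-simple} with $k=2$. The difference is how you control the collision moment generating function: you use the exact factorial-moment expansion $\e[x^{\mathbf C}]=\sum_{j}\frac{((x-1)/2)^j}{j!}\prod_{i<j}\frac{n-2i}{n-2i-1}$, whereas the paper uses pointwise bounds on $\Pr[\bX=k]$ (the $\ell=0$ term, plus a Bonferroni truncation of inclusion--exclusion for $k\ll n$). The product is at least $1$, at most $e^{2j/n}$ for $j\le n/4$, and $O(\sqrt n)$ for every $j$. This gives the two-sided estimate $\e[x^{\mathbf C}]=(1+o(1))e^{(x-1)/2}$ whenever $x=o(n)$, which is a bit cleaner than \Cref{claim:pois-mgf}. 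Your target for the deterministic prefactor is also right: carrying out the expansion, the residual is $\frac{1}{2(n-1)q^2}$ to leading order, plus $O(\frac{1}{nq}+\frac1n)$.

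\textbf{Detection.} The paper splits the edges by a near-regular tournament, so the half-degrees are independent under $\bbQ$ (and conditionally on $H$ under $\bbP$), and centers them at their common known mean. Concentration is then free, at the price of a mean gap of only $\frac14$. Your empirical degree variance has the larger gap $(1-p)^2(1-O(1/n))$, but it needs the cross-vertex covariance computation you defer.

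In that computation, the subtraction of $(2e_G/n)^2$ is essential, not cosmetic. The quantity $\frac1n\sum_i d_i^2$ alone has variance of order $n^2q^3$, since $\mathrm{Cov}(d_i^2,d_j^2)\approx 4(nq)^2q$, and this diverges for $n^{-2/3}\ll q\ll n^{-1/2}$. The clean way is as follows.
\begin{itemize}
\item Write $T=\frac1n\sum_i\epsilon_i^2-\bar\epsilon^2$ with $\epsilon_i=d_i-\e d_i$.
\item With $Y_e$ the centered edge indicators, $\sum_i\epsilon_i^2=2\sum_e Y_e^2+2\sum Y_{ij}Y_{ik}$, where the last sum runs over $2$-paths $j\,i\,k$.
\item These products are mean zero, pairwise uncorrelated, and uncorrelated with the first sum.
\item Hence $\va(\frac1n\sum_i\epsilon_i^2)=O(nq^2+q)$ and $\e\bar\epsilon^2=O(q)$, both $o(1)$.
\end{itemize}
The same computation applies under $\bbP$ conditionally on $H$, and the choice of $H$ does not affect the law of $T$.
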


We first consider the case when $p = \omega(n^{-1/2})$ using the framework introduced in \Cref{sec:LR}, then give a simple hypothesis test which proves the case $p = o(n^{-1/2})$.
\begin{lem}
If $\bbH$ is the uniform distribution over perfect matchings of $K_n$ and $p = \omega(n^{-1/2})$ then $\lim_{n \to \infty} \dtv(\bbP,\bbQ) = 0$.
\end{lem}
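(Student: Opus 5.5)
The plan is to bound the $\chi^2$ divergence and use $\dtv(\bbP,\bbQ)\le \frac12\sqrt{\chi^2(\bbP,\bbQ)}$, which is the Cauchy--Schwarz bound from the introduction. So it suffices to show $\e_{\bG\sim\bbQ}L(\bG)^2 \le 1+o(1)$. I would evaluate this second moment exactly with \cref{eq:coll-simple} at $k=2$, using $e_H=n/2$, $\deltanH=1/(n-1)$ and $N=\binom n2$. The formula splits into a deterministic prefactor
\[
A \defeq (1-\deltanH)^{-2N}\,Z_2\,(q_2/p)^{n}
\]
and the collision moment generating function $B\defeq \e\bigl[x^{C}\bigr]$. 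Here $x\defeq 1/q_2$ and $C=\mathrm{collisions}(\bH_1,\bH_2)$ is the number of edges shared by two independent uniform perfect matchings.

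\textbf{The prefactor $A$.} I would use the identities of \Cref{sec:LR}: $q-\deltanH=p(1-\deltanH)$, $qr_2=(q-\deltanH)^2/q$, and $1-q+qr_2 = 1-2\deltanH+\deltanH^2/q$. From these,
\[
\frac{Z_2}{(1-\deltanH)^{2N}}=\Bigl(1+\frac{\deltanH^2(1/q-1)}{(1-\deltanH)^2}\Bigr)^{N}, \qquad \frac{q_2}{p}=\frac{p(1-\deltanH)^2}{q(1-2\deltanH+\deltanH^2/q)} .
\]
Taking logarithms and expanding to second order, with $N\deltanH^2=\frac{n}{2(n-1)}$ and $n\deltanH\to1$, I expect
\[
\log A = \frac{1}{2q}-\frac1q+1-\frac12+O\Bigl(\frac1{nq^2}+\frac1n\Bigr)=-\frac{1}{2q}+\frac12+o(1).
\]
The terms $\deltanH^2/q^2$ multiplied by $n$ are exactly where the hypothesis is used: they contribute $O(1/(nq^2))=o(1)$ because $q\ge p=\omega(n^{-1/2})$. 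Similarly I would check that $x=1/q_2 = 1/q + O(1/(nq^2)) + O(1)$. The $O(1)$ correction must be tracked exactly to first order, so that the constants cancel.

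\textbf{The collision term $B$.} I would write $x^C=(1+(x-1))^C=\sum_j (x-1)^j\binom{C}{j}$. The quantity $\e\binom Cj$ is the expected number of $j$-edge submatchings of $\bH_1$ that lie in $\bH_2$, which equals
\[
\binom{n/2}{j}\frac{(n-2j-1)!!}{(n-1)!!}=\frac{1}{2^j j!}\prod_{i<j}\Bigl(1+\frac{1}{n-1-2i}\Bigr).
\]
For $j\le n/4$ the product is at most $e^{2j/n}$. Hence the sum over these $j$ is at most $\exp\bigl(\tfrac{x-1}{2}e^{2/n}\bigr)=\exp\bigl(\tfrac{x-1}{2}+O(x/n)\bigr)$, and $x/n=o(1)$. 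For $j>n/4$ the product is at most $\exp(O(\log n))$, while $(x/2)^j/j!$ is super-polynomially small because $x=O(\sqrt n)\ll n$. So $\log B\le \frac{x-1}{2}+o(1)=\frac1{2q}-\frac12+o(1)$, and combining with $A$ gives $\e L^2\le 1+o(1)$.

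\textbf{Main obstacle.} The hard part is the bookkeeping in $\log A$ and in $x$. Both contain terms of order $1/q$, which may be as large as $\sqrt n$, and these must cancel exactly against $(x-1)/2$. The $O(1)$ constants must cancel as well, leaving only errors of order $1/(nq^2)$ and $1/n$. I would first carry out the expansion with $\deltanH$ symbolic, keeping all terms of order $n\deltanH^2/q^2$, $n\deltanH^2/q$, $n\deltanH/q$ and $n\deltanH$. Only at the end would I substitute $\deltanH=1/(n-1)$ and verify the cancellation.
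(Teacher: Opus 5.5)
Your proof is correct. It shares the paper's outer structure ($\chi^2$ via \cref{eq:coll-simple} at $k=2$, then a second-order expansion of the prefactor), and your split into $\log A=-\frac{1}{2q}+\frac12+o(1)$ and $\log B\le\frac{x-1}{2}+o(1)$ is just a reorganization of the paper's computation. Where you genuinely differ is the key step, the bound on the collision moment generating function.

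The paper compares the law of $C$ pointwise with $\Pois(\frac12)$. It truncates the inclusion--exclusion formula for $\Pr[C=k]$ at an even index for $k\le m$, uses the crude bound $O(\sqrt n)/(2^k k!)$ for $k>m$, and sums against $s^{-k}$. You instead expand $x^C=\sum_j (x-1)^j\binom{C}{j}$, whose terms are nonnegative since $x=1/q_2\ge 1$. You then plug in the exact binomial moments $\e\binom{C}{j}=\frac{1}{2^j j!}\prod_{i<j}\bigl(1+\frac{1}{n-1-2i}\bigr)$. These are those of $\Pois(\frac12)$ up to a factor $e^{2j/n}$ for $j\le n/4$ and $O(\sqrt n)$ beyond.

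Your route is shorter and avoids alternating sums entirely, which is a real advantage. Stopping the inclusion--exclusion at $\ell=k^+$ leaves a relative error of order $2^{-k^+}/(k^++1)!$ for bounded $k$, which is not $o(1)$; for $k=0$ it gives only $\Pr[C=0]\le 1$. This is harmless when $q\to 0$, but not when $q$ stays bounded away from $0$, so the paper's route needs a truncation index tending to infinity. What the paper's route gives in exchange is a local comparison of the collision count with $\Pois(\frac12)$, which the $\chi^2$ bound does not need.

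One correction to your bookkeeping: there is no $O(1)$ term in $x$. From $x=\frac{1}{q}\cdot\frac{1-2\delta_n+\delta_n^2/q}{(1-\delta_n/q)^2}$ one gets $x-\frac1q=\frac{2\delta_n}{q^2}+O\bigl(\frac{\delta_n}{q}\bigr)=O\bigl(\frac{1}{nq^2}\bigr)=o(1)$. You need exactly this, since any $\Theta(1)$ discrepancy between $x$ and $1/q$ would survive in $\log \e L(\bG)^2$.
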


\begin{proof}
It suffices to show that $\chi^2(\bbP,\bbQ) = \e_{\bG \sim \bbQ} (L(\bG)^2 - 1) =o_n(1)$.
We begin by instantiating \cref{eq:coll-simple}:
\begin{equation}\label{eq:coll-simple-matching}
\e_{\bG \sim \bbQ} L(\bG)^2 =
\left(\frac{1}{1-\deltanH}\right)^{2\binom{n}{2}}\left(\frac{q_2}{p}\right)^{n} \cdot Z_2 \cdot \e_{\bH_1,\bH_2 \sim \bbH} \left[q_2^{-\mathrm{collisions}(\bH_1,\bH_2)}\right].
\end{equation}
We next obtain an estimate of the moment generating function of $\mathrm{collisions}(\bH_1,\bH_2)$.
\begin{claim}\label{claim:pois-mgf}
    If $s n \gg \frac{\log n}{\log\log n}$, then
    \[
    \e_{\bH_1,\bH_2 \sim \bbH}\left[s^{-\mathrm{collisions}(\bH_1,\bH_2)}\right] \le \left(1 + o_n(1)\right) \e_{\bY \sim \mathrm{Pois}(\frac{1}{2})}\left[s^{-\bY}\right].
    \]
\end{claim}
\begin{proof}
    
We argue that the random variable $\bX = \mathrm{collisions}(\bH_1,\bH_2)$ is close to Poisson with mean $\frac{1}{2}$, such that the above is well-approximated by the Poisson moment generating function.
The set of perfect matchings of $K_n$ is in bijection with unordered partitions of $n$ into pairs, of which there are $(n-1)!!$.
By inclusion-exclusion, when $\bH_1,\bH_2$ are chosen independently and uniformly at random from this set, for any $k \in [n/2]$,
\begin{align*} 
\Pr [\bX = k] 
&= \frac{\binom{n/2}{k}\sum_{\ell=0}^{n/2} (-1)^\ell \cdot\binom{n/2 - k}{\ell} \cdot (n-2k - 2\ell-1)!!}{(n-1)!!}.
\end{align*}
We obtain two upper bounds on this probability.
First, for any $k \in [n/2]$ we obtain a crude bound from the $\ell=0$ summand:
\[
\Pr[\bX = k]
\le \binom{n/2}{k}\frac{(n-2k-1)!!}{(n-1)!!} 
= \frac{2^{n/2}}{k!\binom{n}{n/2}} \cdot \frac{1}{2^{n/2-k}} \binom{n-2k}{n/2-k} 
= O(\sqrt{n}) \cdot \frac{1}{2^k k!},
\]
where the final inequality follows from the fact that $\binom{2m}{m} = \frac{1}{\sqrt{\pi m }}2^{2m}(1+O(\tfrac{1}{m}))$.

Now, we obtain a finer bound for $k\ll n$.
If $k \ll n$, then letting $k^+$ be the smallest even integer at least as large as $k$, we over-estimate the probability by stopping the inclusion-exclusion at the $k^+$ term:
\begin{align*}
\Pr[\bX = k]
&\le \frac{\binom{n/2}{k}\sum_{\ell=0}^{k^+} (-1)^\ell \cdot\binom{n/2 - k}{\ell} \cdot (n-2k - 2\ell-1)!!}{(n-1)!!}\\
&= \frac{1}{k!} \frac{1}{\binom{n}{n/2}} \sum_{\ell=0}^{k^+} (-1)^\ell \frac{1}{\ell!} 2^{k+\ell} \binom{n-2k-2\ell}{n/2-k-\ell}\\
&= \frac{1}{k!} \frac{\sqrt{\pi n/2}}{2^{n}}\left(1+O(\tfrac{1}{n})\right) \cdot \sum_{\ell=0}^{k^+} (-1)^\ell \frac{1}{\ell!} \frac{2^{n-k-\ell}}{\sqrt{\pi(n/2-k-\ell)}}\left(1+O(\tfrac{1}{n})\right)\\
&= \frac{1}{2^k k!} \sum_{\ell=0}^{k^+} \left(-\frac{1}{2}\right)^\ell \frac{1}{\ell!}\left(1+O(\tfrac{k}{n})\right)\\
&\leq \left(1 + O(\tfrac{k}{n})\right) \cdot e^{-1/2} \frac{1}{2^k k!}.
\end{align*}

Let $\bY \sim \mathrm{Pois}(\frac{1}{2})$, with $\Pr[\bY = k] = e^{-1/2} \frac{1}{2^k k!}$.
Let $m \ll n$ be the smallest integer such that $s m > 100 \log n / \log\log n$.
We manipulate the moment generating function of $\bX$, applying the coarse bound when $k > m$ and the finer bound when $k \le m$ and obtain
\begin{align*}
    \sum_{k=0}^\infty \Pr[\bX = k] \left(\frac{1}{s}\right)^k
    &\le (1+o_n(1))\sum_{k=0}^{m} \Pr[\bY = k] \left(\frac{1}{s}\right)^k + \sum_{k=m+1}^n O(\sqrt{n})\frac{1}{2^k k!} \left(\frac{1}{s}\right)^k\\
    &\le (1+o_n(1))\sum_{k=0}^{m} \Pr[\bY = k] \left(\frac{1}{s}\right)^k + \sum_{k=m+1}^n  O(\sqrt{n k})\left(\frac{e}{k s}\right)^k\\
    &= (1+o_n(1)) \e\left[s^{-\bY}\right] + o_n(1)
    = (1+o_n(1))\e\left[s^{-\bY}\right]. \qedhere
\end{align*}
\end{proof}

In the case of a planted perfect matching we have that $\deltanH = \frac{1}{n-1}$, 
and a simple calculation shows that 
if $p = \omega(n^{-1/2})$, 
then $q_2 = \omega(n^{-1/2})$.  
Hence, \Cref{claim:pois-mgf} applies with $s = q_{2}$.
We next recall that when $\bY \sim \mathrm{Pois}(\lambda)$, then $\log(\e{s^{-\bY}}) = \lambda\left(\frac{1}{s} - 1\right)$.
Plugging this into \cref{eq:coll-simple-matching} and taking logarithms, we have that 
\begin{align}
    \log \e_{\bG \sim \bbQ} L(\bG)^2
    &\le o_n(1) - 2\binom{n}{2}\log\left(1-\deltanH\right)  + n\log\left(\frac{q_2}{p}\right)+ \log Z_2 + \frac{1}{2q_2} - \frac{1}{2}. \label{eq:log}
    \end{align}
    Some algebraic simplification gives us
\begin{align}
    q r_2 &= (q-\deltanH)(1-\deltanH/q),\label{eq:simplif}\\
    Z_2 &= (1-q+qr_2)^{\binom{n}{2}} = \left(1-2\deltanH + \tfrac{\deltanH^2}{q}\right)^{\binom{n}{2}},\nonumber\\
    q_2 &= Z_2^{-1/\binom{n}{2}} \cdot q r_2, \nonumber\\
    \frac{q_2}{p} &= (1-\deltanH)\left(1-\tfrac{\deltanH}{q}\right) Z_2^{-1/\binom{n}{2}}.\nonumber
\end{align}
So returning to \Cref{eq:log}, 
\begin{align*}
\log \e_{\bG\sim\bbQ} L(\bG)^2
&\le o_n(1) + \left(n - 2\binom{n}{2}\right) \log \left(1-\deltanH\right) + n \log\left(1-\tfrac{\deltanH}{q}\right)  \\
&\qquad \qquad + \left(1-\tfrac{n}{\binom{n}{2}}\right)\log Z_2 + \frac{1}{2 q_2} - \frac{1}{2}\\
&= o_n(1) - n(n-2)\log (1-\deltanH) + n \log(1-\tfrac{\deltanH}{q})  \\
&\qquad \qquad + \tfrac{n(n-3)}{2} \log \left(1 - 2\deltanH + \tfrac{\deltanH^2}{q}\right) + \frac{1}{2q_2} - \frac{1}{2}.
\end{align*}
Using the Taylor expansion of the logarithm $\log(1-x) = -x - \frac{x^{2}}{2} + o(x^{2})$ as $x \to 0$, 
and recalling that $\deltanH = \frac{1}{n-1}$ and $q = \omega(n^{-1/2})$, 
we have that 
$\log(1 - \deltanH) = - \deltanH - \deltanH^{2} / 2 + o( n^{-2})$, 
that 
$\log(1 - \deltanH / q) = - \deltanH / q + o(n^{-1})$, 
and that 
$\log(1 - 2\deltanH + \deltanH^{2} / q) 
= - 2 \deltanH + \deltanH^{2} / q - 2 \deltanH^{2} + o(n^{-2})$. 
Plugging these expressions into the display above, we see that the display above is equal to 
\[
o_n(1) 
- n(n-2)(-\deltanH - \tfrac{1}{2}\deltanH^2)
+ n \left(-\tfrac{\deltanH}{q}\right)
+ \tfrac{n(n-3)}{2}\left( -2\deltanH + \tfrac{\deltanH^2}{q} - 2\deltanH^2\right)
 + \frac{1}{2q_{2}} - \frac{1}{2}.
\]
This simplifies to 
\[
o_{n}(1) + \frac{1}{2q_{2}} - \left(1 + \Theta \left( \tfrac{1}{n} \right) \right) \frac{1}{2q}.
\]
Finally, using the fact that $q_{2} = q \left( 1 + \Theta\left( 1/(qn) \right) \right)$ 
and that $q = \omega(n^{-1/2})$, 
we obtain that the whole expression is $o_{n}(1)$. 
Hence $\chi^2(\bbP, \bbQ) = \e_{\bG \sim \bbQ} ( L(\bG)^2 - 1 ) = o_n(1)$, concluding the proof.
\end{proof}

\begin{lem}\label{lem:test-match}
If $\bbH$ is the uniform distribution over perfect matchings of $K_n$ and $p = o(n^{-1/2})$ then $\lim_{n \to \infty} \dtv(\bbP,\bbQ) = 1$.
\end{lem}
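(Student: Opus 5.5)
We use an explicit, efficiently computable test based on the \emph{empirical variance of the degrees}. The idea: under $\bbP$, every vertex has degree $1+\Bin(n-2,p)$, with variance about $np$. Under $\bbQ$, every degree is $\Bin(n-1,q)$ with $q = p+\frac{1-p}{n-1}$, with variance about $nq \approx np+1$. The two models have the same expected number of edges, so edge counting is useless. The degree \emph{spread}, however, is larger by about $1$ per vertex under $\bbQ$. Concretely, with $d_i$ the degree of vertex $i$ and $\bar d = 2e_G/n$, set
\[
T(G) \defeq \sum_{i=1}^n (d_i-\bar d)^2 = \sum_{i=1}^n d_i^2 - \frac{4e_G^2}{n}.
\]
The test outputs $1$ (planted) iff $T(G) < \tau$, where $\tau$ is the midpoint of $\e_{\bbP}T$ and $\e_{\bbQ}T$. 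Subtracting $4e_G^2/n$ is essential: the raw count $\sum_i d_i^2$ (equivalently, the number of paths of length two) fluctuates by order $n^2 q^{3/2}$ through the global edge count alone, which swamps the signal.

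\textbf{Step 1 (means).} Write $\sum_i d_i^2 = 2e_G + 2C$, where $C$ is the number of cherries (paths of length two). Compute $\e C$ and $\e e_G^2$ exactly in both models.
\begin{itemize}
\item Under $\bbQ$ these are binomial computations.
\item Under $\bbP$, split the edges into matching edges $M$ and independent $p$-edges $R$. Pairs of $M$-edges never form a cherry, since they are disjoint. Cherries come from $R$–$R$ pairs (about $\binom{n}{3}\cdot 3p^2$) and $M$–$R$ pairs (about $n^2 p$).
\end{itemize}
We then expect
\[
\e_{\bbQ} T - \e_{\bbP} T = n\bigl(1+o(1)\bigr)
\]
whenever $p = o(1)$. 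Heuristically, this is $n\bigl[(n-1)q(1-q) - (n-2)p(1-p)\bigr] \approx n(1-p)^2 \cdot \frac{n-1}{n-1} \approx n$. The point is that the mean degree $nq$ is matched, but the planted edge contributes no variance. The term $4e_G^2/n$ contributes $\frac4n\bigl((\e e_G)^2 + \va(e_G)\bigr)$. The first part cancels between the models, and $\frac4n\va(e_G) = O(nq)$, which is $o(n)$ when $q\to 0$. We also check that the $M$–$R$ cross terms match the $2p/n$ part of $q^2$, so that only the $1/n^2$ part of $q^2 = p^2+2p/n+\dots$ survives, giving the $n/2$ extra cherries under $\bbQ$.

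\textbf{Step 2 (variances) — the main obstacle.} We need
\[
\va_{\bbQ}(T),\ \va_{\bbP}(T) = O\bigl(n(nq)^2 + n\bigr).
\]
This is $o(n^2)$ precisely because $nq = np + O(1) = o(\sqrt n)$. Chebyshev's inequality then shows that both error probabilities of the test are $o(1)$, giving $\dtv(\bbP,\bbQ)\to 1$.

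To prove the variance bound, we would write $T = \sum_i (d_i - \e d_i)^2 - n(\bar d - \e \bar d)^2$, which holds because $\e d_i$ is the same for all $i$ within each model. The second term has mean $O(nq)$ and variance $O(n q^2 \cdot n^{-0}) = o(n^2)$ by fourth-moment bounds for $e_G$. For the first term, expand $(d_i-\e d_i)^2$ into centered edge indicators $\xi_e = \Ind{e\in G}-q$; under $\bbP$, use $R$-edges, with the matching edges contributing deterministically given $M$. The variance becomes a sum over pairs of vertices $i,j$ and quadruples of edges. Terms with $i\neq j$ are correlated only through the single shared edge $ij$, and these contribute $O(n^2 \cdot q\cdot nq)$-type terms, which we must check carefully. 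The diagonal terms give $O(n (nq)^2)$.

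Under $\bbP$, we additionally condition on $M$ and use symmetry over $M$. Conditional on $M$, $T$ depends only on $R$, and $M$ does not change the law of $T$ up to a relabeling, so no extra variance arises. The bookkeeping of which edge-quadruples survive centering, especially the shared-edge cross terms, is where we expect the real work. If a term turns out to be of order $n^2 q$, we would switch to the centered statistic $\sum_i (d_i-\e d_i)^2$ with a separately estimated edge count, or to a U-statistic in the $\xi_e$ that exactly removes the diagonal contributions.
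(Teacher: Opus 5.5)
Your test works, but it takes a different route from the paper.

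\textbf{The paper's route.} The paper fixes a near-regular tournament, i.e.\ a partition $P_1,\dots,P_n$ of $E(K_n)$ with every edge of $P_i$ incident to $i$. It uses the ``out-degrees'' $\bX_i=\sum_{e\in P_i}\Ind{e\in\bG}$, centered at their common known mean. Since the $P_i$ are disjoint, the $\bX_i$ are independent under $\bbQ$, and also under $\bbP$ conditionally on the matching. So concentration of $\frac1n\sum_i(\bX_i-\e\bX_i)^2$ is immediate. The price is a smaller gap. The excess $q-p$ adds about $\frac12$ to the variance of each $\bX_i$ under $\bbQ$. Under $\bbP$, whether $P_i$ contains the matching edge at $i$ behaves like a Bernoulli$(\frac12)$ variable and gives back $\frac14$, leaving $\frac14$ per vertex.

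\textbf{Your route.} You keep full degrees, where the planted contribution is deterministic, so the gap is about $1$ per vertex. In total it is $n(1-p)^2\frac{n-2}{n-1}-O(1)$ (note $\frac{n-2}{n-1}$, not $\frac{n-1}{n-1}$). In exchange you must handle the dependence between degrees. Your Step~2, which you leave as a plan, is easier than you fear. With $\mu=(n-1)q=1+(n-2)p$ and $\xi_e$ the centered edge indicators, under $\bbQ$
\[
\sum_i (d_i-\mu)^2 \;=\; 2\sum_{e}\xi_e^2 \;+\; 2\sum_{e\sim f}\xi_e\xi_f ,
\]
where $e\sim f$ runs over unordered pairs of distinct edges sharing a vertex. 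The two sums are uncorrelated, and the summands of the second are pairwise uncorrelated. Hence the variance is $O(n^2q+n^3q^2)=o(n^2)$, precisely because $q=o(n^{-1/2})$. The same holds under $\bbP$ given $M$, with $p$ and $K_n\setminus M$ in place of $q$ and $K_n$.

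In your vertex-pair bookkeeping, the covariance for $i\neq j$ is just $\va(\xi_{ij}^2)\approx q$. Those terms total $O(n^2q)$, which is harmless since $q\to 0$, so no fallback statistic is needed. The correction $n(\bar d-\mu)^2=\frac4n(e_G-\e e_G)^2$ is nonnegative with mean $O(nq)=o(n)$, so Markov's inequality handles it. (Its variance is $O(n^2q^2)$, not $O(nq^2)$, but either is $o(n^2)$.) Alternatively, center at the known $\mu$ as the paper does and drop this term altogether.

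\textbf{Comparison.} The paper's partition buys independence and a one-line concentration argument. It costs an auxiliary, label-dependent construction and a constant-factor loss in the gap. Your statistic is canonical and symmetric: equivalently, it counts cherries corrected by a function of the edge count. It needs only a short explicit second-moment computation. Both succeed precisely when $q=o(n^{-1/2})$.
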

Before proving the lemma, we give some intuition from the (simpler) bipartite case.
Suppose we have a bipartite random graph on $2n$ vertices; then, the left-hand-side vertices' degrees are independent.
In the unplanted case, the law of the degree of each left vertex is close to $\bA \sim \Pois(\lambda)$ for $\lambda = nq$ when $q,p \ll n^{-1/2}$.
In the planted case, the law is instead close to $\bB \sim 1+\Pois(\lambda - 1)$.
The means of $\bA$ and $\bB$ are the same, but the variances are $\lambda$ and $\lambda-1$ respectively.
Further, the standard deviations of $\bA' = (\bA - \e[\bA])^2$ and $\bB' = (\bB - \e[\bB])^2$ are also $\Theta(\lambda)$.
Hence the $n = \Omega(\lambda^{2})$ i.i.d.\ samples furnished by the left vertex degrees suffice to distinguish the two distributions. 
In the non-bipartite case the degrees are not independent, but the idea of the proof is effectively the same.

\begin{proof}[Proof of \Cref{lem:test-match}]
We design a hypothesis test that distinguishes $\bbP$ and $\bbQ$ with high probability when $p = o(n^{-1/2})$ (and note that $p = o(n^{-1/2})$ implies $q = o(n^{-1/2})$).
Choose a near-regular tournament of $K_n$, that is, a partition $P_1,\ldots,P_n$ of the edges of $K_n$ such that all edges in $P_i$ are incident on vertex $i$, and each part contains either $\lceil (n-1)/2 \rceil$ or $\lfloor (n-1)/2 \rfloor$ edges.

For each $i \in [n]$, let $\bX_i = \sum_{e \in P_i} \Ind{e \in \bG}$, and let $\bY_i = (\bX_i - \e[\bX_i])^2$ (note that $\e\bX_i$ is the same under $\bbP$ and $\bbQ$).
The hypothesis test evaluates the statistic $\bY = \frac{1}{n}\sum_{i=1}^n \bY_i$, selecting $\bbQ$ if and only if the value is larger than $\e_\bbQ\bY - \frac{1}{8}$.
We  argue that the difference of means under $\bbP$ and $\bbQ$ is at least $\frac{1}{4}-o_n(1)$, and that $|\bY-\e\bY| = o_n(1)$ with high probability under both $\bbP$ and $\bbQ$, from which the claim follows.

Under $\bbQ$, the $\bY_i$ are independent, and each $\bX_i$ has the law of $\Bin(|P_i|,q)$.
Hence the mean of each term is $\e_{\bbQ} \bY_i = |P_i|q(1-q)$, and the mean of our statistic is $\e_{\bbQ}[\bY] = \frac{n-1}{2}q(1-q)$.
We further have that the statistic concentrates around its mean, because it is a sum of independent random variables with bounded variance: for each $i \in [n]$, $\sqrt{\e_{\bbQ} (\bY_i - \e[\bY_i])^2} = O(nq + 1)$.
Thus, applying standard concentration inequalities, with high probability, $|\bY - \e_\bbQ \bY| \le \frac{O(nq+1)}{\sqrt{n}} =o_n(1)$ when $q,p \ll n^{-1/2}$.

Under $\bbP$, conditioned on a specific planted matching $H$, the $\bY_i$ are independent, with the $n/2$ of the $\bX_i$ for which $P_i \cap E(H) = \emptyset$ following the law of $\Bin(|P_i|,p)$ and the rest following the law of $1+\Bin(|P_i|-1,p)$. 
A calculation yields that
\begin{align*}
\e_{\bbP} \left[ \bY_i \mid H\right] 
&= \e_{\bbP} \left(\sum_{e \in P_i \setminus H} (\Ind{e\in \bG} - p)\right)^2 + \left( \sum_{e \in P_i \cap E(H)} (\Ind{e\in \bG}-p) - |P_i|(q-p)\right)^2\\
&= (|P_i\setminus E(H)|)p(1-p) + (|P_i \cap E(H)|(1-p) -(q-p)|P_i|)^2 \\
&= (|P_i| - \Ind{|E(H)\cap P_i| = 1})p(1-p) + \tfrac{1}{4}+o_n(1),
\end{align*}
where we used that $|P_{i}|(q-p) = 1/2 + o_{n}(1)$, 
and so 
\[
\e_{\bbP}\bY = \e_{\bbH}\e_{\bbP}[\bY \mid \bH] = \frac{n-1}{2}p(1-p) - \tfrac{p(1-p)}{2} + \tfrac{1}{4}(1+o_n(1)).
\]
Concentration again follows from the fact that $\bY$ is a sum of independent random variables conditioned on $H$, each with bounded variance, $\sqrt{\e_{\bbP} (\bY_i - \e[\bY_i])^2} = O(np + 1)$, so with high probability $|\bY - \e_{\bbP}\bY| \le \frac{O(np + 1)}{\sqrt{n}} = o_n(1)$.

Finally, we claim that the means are $\frac{1}{4} - o_n(1)$ separated.
Recalling again that $\delta_n = \frac{1}{n-1}$ and $p = \frac{q-\deltanH}{1-\deltanH}$, $1-p = \frac{1-q}{1-\deltanH}$,
\begin{align*}
\e_{\bbP} \bY - \e_{\bbQ} \bY 
&= \frac{(1-p)^2}{4}(1+o_n(1)) - \frac{p(1-p)}{2} + \frac{n-1}{2}\left(p(1-p) - q(1-q)\right)\\
&= o_n(1) + \frac{1}{4} + \frac{n-1}{2}\left(\frac{q-\delta_n}{(1-\delta_n)}\frac{1-q}{1-\delta_n} - q(1-q)\right)\\
&= o_n(1) + \frac{1}{4} + \frac{n-1}{2}\left(-\delta_n + o(n^{-1}) \right)\\
&\le - \frac{1}{4} + o_n(1). \qedhere
\end{align*}
\end{proof}

\section{Planting a Spanning Tree}
\label{sec:ST}

Suppose now that in a sample from $\bbP$, the planted graph $\bH$ is a uniformly random spanning tree of $K_n$. In this section we prove the following result. 

\begin{thm}\label{thm:spanning_tree}
If $\bbP$ is $\bbG(n,p)$ with a planted spanning tree and $\bbQ$ is $\bbG(n,q)$ for $q = p + \frac{2(1-p)}{n}$, then
\[
\lim_{n \to \infty} \dtv(\bbP,\bbQ) = \begin{cases}
0 & \text{when }p = \omega(n^{-1/2}),\\
1 & \text{when }p = o(n^{-1/2}).
\end{cases}
\]
\end{thm}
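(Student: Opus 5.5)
The first part ($p=\omega(n^{-1/2})$) goes through the same $\chi^2$ framework as the matching case. We have $e_H = n-1$ and $\deltanH = 2/n$, and by \cref{eq:coll-simple} with $k=2$ it suffices to show
\[
\left(\tfrac{1}{1-\deltanH}\right)^{2\binom{n}{2}}\left(\tfrac{q_2}{p}\right)^{2(n-1)} Z_2\, \e\bigl[q_2^{-\bX}\bigr] \le 1+o_n(1),
\]
where $\bX=\mathrm{collisions}(\bH_1,\bH_2)$ counts the edges common to two independent uniform spanning trees.

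Rather than prove a Poisson local limit as in \Cref{claim:pois-mgf}, I would bound the generating function directly. Write $t = 1/q_2 - 1$, so that $t = o(\sqrt n)$. Expanding $(1+t)^{\bX}$ over subsets of the common edges gives the exact identity
\[
\e\bigl[(1+t)^{\bX}\bigr] = \sum_{F} t^{|F|}\, \Pr[F\subset \bH]^2 ,
\]
where the sum is over forests $F$ in $K_n$. Cayley's formula for the number of spanning trees containing a fixed forest gives $\Pr[F\subset\bH] = n^{-|F|}\prod_j a_j$, where $a_1,a_2,\ldots$ are the component sizes of $F$ (counting only non-singleton components). The summand is therefore multiplicative over components. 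Dropping the disjointness constraint (the exponential formula) yields
\[
\e\bigl[(1+t)^{\bX}\bigr] \le \exp\Bigl(\sum_{a\ge 2} \tbinom{n}{a}\, a^{a-2}\, a^2\, n^{-2(a-1)}\, t^{a-1}\Bigr).
\]
The $a=2$ term is $2t(1-1/n)$. The $a=3$ term is $O(t^2/n)=o(1)$. For $a\ge 3$ each term is at most $e\, n\,(et/n)^{a-1}$, and these sum to $o(1)$ because $t/n = o(n^{-1/2})$. Hence $\e[q_2^{-\bX}] \le \exp\bigl(2(1/q_2-1)+o(1)\bigr)$, which plays the role of the Poisson$(2)$ generating function. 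It remains to redo the Taylor expansion from the matching proof with $\deltanH = 2/n$ and $e_H=n-1$, using \cref{eq:simplif} and checking that the $1/q$-order terms cancel, as they do there. I expect this cancellation to be the main obstacle: it requires tracking the $\deltanH^2/q$ terms in $\log Z_2$ and in $\log(q_2/p)$ to second order, and using $q_2 = q(1+O(1/(qn)))$ together with $q=\omega(n^{-1/2})$ so that $1/q_2-1/q=o(1)$. The forest bound is what makes the case $t$ close to $\sqrt n$ harmless.

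For $p=o(n^{-1/2})$, I would use the test statistic $\bY=\frac1n\sum_i(\deg_{\bG}(i)-\e\deg(i))^2$; the mean degree is $(n-1)q$ under both $\bbP$ and $\bbQ$. Under $\bbQ$ its mean is $(n-1)q(1-q) = (n-1)p(1-p)+2(1-p)^2+O(1/n)\cdot$, that is, $np+2+o(1)$. Under $\bbP$ the degree splits as $\deg_{\bH}(i)+\Bin(n-1-\deg_{\bH}(i),p)$. By the Prüfer code, $\deg_{\bH}(i)-1$ is a multinomial coordinate with variance $(n-2)\frac1n(1-\frac1n)=1+o(1)$. The mean of $\bY$ under $\bbP$ is thus about $np+1$ (cross terms contribute $O(p)$), so the two means differ by $1-o(1)$. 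For concentration I would bound $\va(\bY)$ by second moments. Under $\bbQ$, two distinct degrees share only one edge, so the covariances of the squared centered degrees are $O((nq+1)^2/n)$. Under $\bbP$, conditioned on $\bH$ the random-graph parts are nearly independent, and the multinomial coordinates of the Prüfer code are negatively correlated with $O(1/n)$ covariance for fourth-order quantities. This gives $\va(\bY)=O((np+1)^2/n)=o(1)$, and Chebyshev's inequality then shows that thresholding at the midpoint of the two means errs with probability $o(1)$ under both hypotheses.
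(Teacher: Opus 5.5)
Your proposal is correct, but it takes a different route from the paper in both halves.

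For $p=\omega(n^{-1/2})$, the paper bounds the collision generating function in one line. Given $\bH_1$, the collision count is a sum of $n-1$ negatively associated edge indicators of $\bH_2$. Shao's comparison then bounds its exponential moments by those of $\Bin(n-1,2/n)$, giving $(1+\frac{2}{n}t)^{n-1}$ with $t=1/q_2-1$. Your exact forest expansion, via the forest version of Cayley's formula and the exponential-formula bound, reaches the same exponent $2(1-1/n)t+o(1)$ by elementary enumeration alone. It also makes visible that single shared edges produce the Poisson$(2)$-type term, while shared subtrees on $a\ge 3$ vertices cost only $O(t^2/n)$. The paper's argument is shorter and needs only the marginals $\Pr[e\in\bH]=2/n$ plus negative association. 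The cancellation you expected to be the main obstacle is routine. The deterministic prefactors sum to $-e_H\delta_n(1/q-1)+o(1)$ with $e_H\delta_n=2(1-1/n)=\binom{n}{2}(2/n)^2$, which is exactly your $a=2$ coefficient. This leaves $2(1-1/n)(1/q_2-1/q)=O(1/(nq^2))=o(1)$.

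For $p=o(n^{-1/2})$, you use full degrees rather than the tournament half-degrees. The paper's partition buys exact independence of the $\bX_i$ under $\bbQ$ (and given $\bH$ under $\bbP$), at the price of a gap of only $1/4$. Your statistic has gap $1$ and pays with a short covariance computation, which is harmless. Under $\bbQ$, two squared centred degrees have covariance exactly $\va((\Ind{ij\in\bG}-q)^2)=O(q)$. Under $\bbP$ given $\bH$, the covariance is $O(p(1+|T_i|)(1+|T_j|))$, where $T_i=(1-p)(D_i-2(n-1)/n)$ and $D_i=\deg_{\bH}(i)$. A side benefit is that your $\e_{\bbP}[\bY\mid\bH]$ depends on $\bH$ only through $\sum_i D_i^2$, a function of the Pr\"ufer multinomial. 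So the variance of the conditional mean is an exact $O(1/n)$ computation rather than a Poisson approximation.
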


To upper bound the total variation distance when $p = \omega(n^{-1/2})$, again we use the framework set up in \Cref{sec:LR}.
To control the moment generating function of the collisions, we can make use of the fact that the edges of a uniformly random spanning tree are negatively associated.

\begin{fact}\label{fact:neg-assoc}
    If $J = (E,V)$ is a graph and $\bH$ is a uniformly random spanning tree of $J$, then the random variables $\{\Ind{e \in \bH}\}_{e \in E}$ are negatively associated.
\end{fact}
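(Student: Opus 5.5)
This is the classical theorem of Feder and Mihail (1992) that uniform spanning trees are negatively associated; I would prove it by their route. Throughout, "negatively associated" means: for disjoint $S,T\subseteq E$ and increasing functions $f$ of $(\Ind{e\in\bH})_{e\in S}$ and $g$ of $(\Ind{e\in\bH})_{e\in T}$, we have $\e[fg]\le \e f\,\e g$.

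\textbf{Step 1 (pairwise negative correlation).} I would first show $\Pr[e,f\in\bH]\le \Pr[e\in\bH]\Pr[f\in\bH]$ for $e\ne f$. Kirchhoff's theorem gives $\Pr[e\in\bH]=R_{\mathrm{eff}}(e)$, the effective resistance between the endpoints of $e$ in the unit-resistance network on $J$. Conditioning on $f\in\bH$ amounts to contracting $f$, which shorts two nodes. By Rayleigh monotonicity this can only decrease resistances, so $\Pr[e\in\bH\mid f\in\bH]\le \Pr[e\in\bH]$. The same statement holds for every minor of $J$ and, more generally, for weighted spanning tree measures, since the Rayleigh argument carries over to arbitrary conductances.

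\textbf{Step 2 (induction on $|E|$).} Feder and Mihail's framework concerns a measure on the bases of a matroid that is \emph{balanced}: every minor obtained by contraction and deletion is pairwise negatively correlated. Step 1 shows that graphic matroids with the uniform (or weighted) tree measure are balanced, because contraction and deletion of graph edges give uniform spanning trees of minors.

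The induction runs as follows. Take increasing $f$ depending on $S$ and increasing $g$ depending on $T$, with $S\cap T=\emptyset$. The number of edges of $\bH$ is fixed at $|V|-1$. Write $\e[g\mid e\in\bH]$ and $\e[g\mid e\notin\bH]$ for $e\in S$; these are expectations over the contracted and deleted graphs, so the induction hypothesis applies there. Decompose $\e[fg]$ by conditioning on the edges of $S$ one at a time, and reduce to showing that $\e[g\mid \cdot]$ is increasing in the $S$-coordinates in the appropriate averaged sense.

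The key lemma is that $\e[\Ind{e\in\bH}\mid \mathcal F_T]$ behaves monotonically: for increasing $g$ on $T$, $\sum_{e\in S}\Pr[e\in\bH\mid g\text{-tilted}]$ is at most the untilted sum. This follows from the fixed edge count $|V|-1$ together with the inductive negative association on minors. Feder and Mihail argue by choosing an edge $e\in S$ maximizing the covariance of $g$ with $\Ind{e\in\bH}$ and averaging.

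\textbf{Main obstacle.} The hard part is the averaging argument in Step 2, which upgrades pairwise negative correlation to full negative association using the constant rank. Pairwise correlation alone does not imply association, and the rank constraint is exactly what closes the induction. Since the paper labels this statement a Fact, I would in practice cite Feder--Mihail, or Pemantle's survey, rather than reproduce it. If a self-contained alternative were needed, I would note that the uniform spanning tree is a determinantal process with a symmetric transfer-current kernel. Negative association for such processes follows from Lyons' theorem, that determinantal measures with a Hermitian contraction kernel are negatively associated. That theorem is in turn proved via the strong Rayleigh property (Borcea--Brändén--Liggett), since the tree generating polynomial is real stable.
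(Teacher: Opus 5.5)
Your bottom line agrees with the paper, which states this as a known fact without proof. It is the Feder--Mihail theorem, and it also follows from the strong Rayleigh property of the spanning-tree polynomial; your Step~1 correctly shows that the graphic matroid is balanced. The trouble is the self-contained sketch of Step~2, which would not close as written. Write $x_e=\Ind{e\in\bH}$. Conditioning on one edge $e$ with $0<\Pr[e\in\bH]<1$ and applying the induction hypothesis in $J/e$ and $J\setminus e$ gives
\[
\mathrm{Cov}(f,g)\le \frac{\mathrm{Cov}(f,x_e)\,\mathrm{Cov}(g,x_e)}{\va(x_e)},
\]
so you need a single edge at which the two covariances have opposite signs. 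The right ingredients are these:
(a) the \emph{per-edge} inequality $\mathrm{Cov}(g,x_e)\le 0$ for every $e\notin T$;
(b) the fixed edge count, which together with (a) applied to $f$ gives $\sum_{e\in S}\mathrm{Cov}(f,x_e)=-\sum_{e\notin S}\mathrm{Cov}(f,x_e)\ge 0$.
One then conditions on an $e\in S$ maximizing $\mathrm{Cov}(f,x_e)$. Your version has the roles reversed. An averaged bound $\sum_{e\in S}\mathrm{Cov}(g,x_e)\le 0$ only yields some edge where $g$ is negatively correlated, with no control on the sign of $\mathrm{Cov}(f,x_e)$ there, and selecting $e$ by its covariance with $g$ does not help. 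Likewise, reducing to monotonicity of $\e[g\mid x_S]$ in $x_S$ would need a Harris-type positive association for the law of $x_S$. That is false in general for this negatively dependent measure, and the monotonicity you would want is decreasing, not increasing.

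The deeper gap is that (a) does not follow from the induction hypothesis. It is the case $|S|=1$ of the statement for $J$ itself, not for a minor, so appealing to ``inductive negative association on minors'' plus the edge count is circular as stated. It needs its own argument. For instance, suppose $\mathrm{Cov}(g,x_t)>0$ for some $t\notin T$. Apply the display to $g$ and $x_t$ with any non-degenerate $e\ne t$; since $\mathrm{Cov}(x_t,x_e)\le 0$ by Step~1, this forces $\mathrm{Cov}(g,x_e)<0$ for all such $e$. Next use $\sum_e\mathrm{Cov}(g,x_e)=0$ and $|\mathrm{Cov}(x_t,x_e)|\le\va(x_e)$ (again from the edge count and Step~1). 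Together these give $|\mathrm{Cov}(g,x_e)|\ge\sum_{e'\ne t}|\mathrm{Cov}(g,x_{e'})|$ for each such $e$, which is impossible if there are two of them. If there is only one, then $x_e=1-x_t$ and the claim is trivial. With (a) in hand, (b) and the display complete the induction. Since you would cite the theorem anyway, your plan is sound, but the sketch should not be presented as the Feder--Mihail argument.
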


For a subset $E' \subset E$, let the random variable $\bX_{E'} = \sum_{e \in E'} \Ind{e \in \bH}$, and let $\bY_{E'} = \sum_{e \in E'} \Ber(\Pr[e \in \bH])$ be a sum of independent Bernoulli random variables which are marginally identical to the $\Ind{e \in \bH}$.
Since the edges of the spanning tree are negatively associated, it holds \cite{Shao00} that for any $t \in \R$,
\[
\e[\exp(t \bX_{E'})] \le \e[\exp(t \bY_{E'})].
\]

\begin{lem}
If $\bbH$ is the uniform distribution over spanning trees of $K_n$ and $p = \omega(n^{-1/2})$ then $\lim_{n \to \infty} \dtv(\bbP,\bbQ) = 0$.
\end{lem}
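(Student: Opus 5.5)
Following the matching case, I will show $\chi^2(\bbP,\bbQ)=\e_{\bG\sim\bbQ}L(\bG)^2-1=o_n(1)$ by instantiating \cref{eq:coll-simple} with $k=2$, $e_H=n-1$ and $\deltanH=(n-1)/\binom n2=2/n$:
\[
\e_{\bG \sim \bbQ} L(\bG)^2 = \left(\frac{1}{1-\deltanH}\right)^{2\binom{n}{2}}\left(\frac{q_2}{p}\right)^{2(n-1)} Z_2 \cdot \e_{\bH_1,\bH_2}\left[q_2^{-\mathrm{collisions}(\bH_1,\bH_2)}\right].
\]
The new ingredient is a bound on the moment generating function of the collisions. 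I condition on $\bH_1=H_1$ and set $E'=E(H_1)$. Then $\mathrm{collisions}(H_1,\bH_2)=\bX_{E'}$, the number of edges of $H_1$ that lie in the uniform spanning tree $\bH_2$. Each edge of $K_n$ lies in $\bH_2$ with probability exactly $2/n$, by symmetry. By \Cref{fact:neg-assoc} and the negative-association MGF comparison with $t=\log(1/q_2)>0$,
\[
\e\left[q_2^{-\bX_{E'}}\right]\le \e\left[q_2^{-\bY_{E'}}\right]=\left(1+\tfrac{2}{n}\left(\tfrac{1}{q_2}-1\right)\right)^{n-1}\le \exp\left(\tfrac{2(n-1)}{n}\left(\tfrac1{q_2}-1\right)\right).
\]
This bound is uniform in $H_1$, so it also holds after averaging over $\bH_1$. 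It is the analogue of \Cref{claim:pois-mgf}, with Poisson mean $2(n-1)/n\approx 2$ in place of $1/2$. It is actually cleaner, since no inclusion–exclusion is needed.

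Next I take logarithms and expand exactly as in the proof for matchings. The identities \eqref{eq:simplif} are general in $\deltanH$ and still hold: $Z_2=(1-2\deltanH+\deltanH^2/q)^{\binom n2}$ and $q_2/p=(1-\deltanH)(1-\deltanH/q)Z_2^{-1/\binom n2}$. Writing $e_H=n-1$, the log of the prefactor becomes
\[
(2e_H-2\tbinom n2)\log(1-\deltanH)+2e_H\log(1-\tfrac{\deltanH}{q})+\left(1-\tfrac{2e_H}{\binom n2}\right)\log Z_2 .
\]
I Taylor expand with $\deltanH=2/n$ and $q=\omega(n^{-1/2})$, keeping terms to order $\deltanH^2$ and $\deltanH^2/q$, since only terms of order $n^2\deltanH^2/q=\Theta(1/q)$ and $n\deltanH/q$ can survive.

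The main obstacle is verifying the cancellation of the $\Theta(1/q)$ terms. The terms of order $1$ cancel, leaving $O(1)$ constants plus $1/q$ terms. Those $1/q$ terms are $-2e_H\deltanH/q\approx -4/q$ from the second term and $+\tfrac12 n^2\deltanH^2/q\approx 2/q$ from $\log Z_2$. Together with the MGF bound's $\approx 2/q_2$, the sum is $2/q_2-2/q+O(1/(nq^2))+O(1)$. Using $q_2=q(1+O(1/(nq)))$, this gives $2/q_2-2/q=O(1/(nq^2))=o(1)$ because $q=\omega(n^{-1/2})$.

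What remains are constant-order terms. These must cancel exactly, including the $-2$ from the MGF and the $\deltanH^2$ pieces of the logarithms, to get $\log\e L^2\le o_n(1)$. I will track them carefully; this bookkeeping is where an error is most likely. If it leaves a positive constant $c$, then $\chi^2$ is merely bounded, $e^c-1$, rather than vanishing, and that would not give $\dtv\to0$. In that case I would sharpen the MGF bound, using the exact inclusion probability and second-order corrections for the $n-1$ edges of a fixed tree, to recover the missing constant.
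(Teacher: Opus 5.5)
Your approach is the paper's: you bound the collision MGF by the binomial MGF $\bigl(1+\tfrac2n(\tfrac1{q_2}-1)\bigr)^{n-1}$ using negative association of the uniform spanning tree (conditioning on $\bH_1$ explicitly, which the paper leaves implicit), insert it into the $k=2$ moment formula with the same simplifications, and Taylor expand. The one step you left unverified, the constant-order bookkeeping, does close. No sharpening of the MGF bound is needed, and even your cruder $\exp\bigl(e_H\delta_n(\tfrac1{q_2}-1)\bigr)$ suffices.

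To see it, write $N=\binom n2$ and $u=1-2\delta_n+\delta_n^2/q$, so that $Z_2=u^N$, and use $N\delta_n=e_H$. The exact identities
\[
\frac{u}{(1-\delta_n)^2}=1+\frac{\delta_n^2(\frac1q-1)}{(1-\delta_n)^2},\qquad \frac{(1-\delta_n)(1-\delta_n/q)}{u}=1-\frac{\delta_n(\frac1q-1)}{u},
\]
together with $\log(1+y)\le y$ (applied to all three factors) give
\[
\log \e_{\bG\sim\bbQ}L(\bG)^2\le e_H\delta_n\Bigl[\bigl(\tfrac1q-1\bigr)\Bigl(\tfrac{1}{(1-\delta_n)^2}-\tfrac2u\Bigr)+\tfrac1{q_2}-1\Bigr]
= e_H\delta_n\Bigl(\tfrac1{q_2}-\tfrac1q\Bigr)+O\Bigl(\tfrac1{nq}\Bigr).
\]
Inside the bracket the constants $-1,+2,-1$ cancel exactly, along with the $1/q$ terms. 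Since $\tfrac1{q_2}-\tfrac1q\le 4\delta_n/q^2$ for large $n$, the right-hand side is $O(1/(nq^2))=o_n(1)$. As written, though, your proof is incomplete until this computation is actually carried out, because this is precisely the place where the argument could have failed.
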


\begin{proof}
Let $\bH_1,\bH_2$ be two spanning trees of $K_n$ chosen uniformly at random. 
We can write 
\[
\mathrm{collisions(\bH_1,\bH_2)} = \sum_{e \in \bH_1} \Ind{e \in \bH_2}.
\]
From \Cref{fact:neg-assoc},
\[
\e q_2^{-\mathrm{collisions}(\bH_1,\bH_2)} \le \e q_2^{-\bY} = \left(1 + \frac{2}{n}\left(\frac{1}{q_2}-1\right)\right)^{n-1}
\]
for $\bY \sim \Bin(n-1,\frac{n-1}{\binom{n}{2}})$.
Plugging this into \Cref{eq:coll-simple} with $k = 2$ and using $e_{H} = n-1$,  
\begin{align*}
\log \e_{\bG \sim \bbQ}L(\bG)^2 
&\leq -n(n-1)\log\left(1-\delta_n\right) + 2(n-1) \log \left(\frac{q_2}{p}\right) + \log Z_2 \\
&\qquad \qquad + (n-1)\log\left(1+\frac{2}{n}\left(\frac{1}{q_2}-1\right)\right).
\intertext{Applying again the simplifications from \Cref{eq:simplif}, this is further equal to}
&= (2(n-1)-n(n-1))\log\left(1-\delta_n\right) + 2(n-1)\log\left(1-\tfrac{\deltanH}{q}\right) \\&\qquad\qquad + \left(\binom{n}{2} - 2(n-1)\right)\log \left(1-2\deltanH + \tfrac{\deltanH^2}{q}\right) \\
&\qquad \qquad + (n-1)\log\left(1+\frac{2}{n}\left(\frac{1}{q_2}-1\right)\right). 
\intertext{Taking a Taylor expansion of the logarithm, and using that $\deltanH = \frac{2}{n}$, that $p,q=\omega(n^{-1/2})$, and that $q_{2} = q \left( 1 + \Theta(1/(qn)) \right)$, the expression above further simplifies to}
&= o_n(1) -(n-2)(n-1)\left(-\delta_n - \tfrac{1}{2}\deltanH^2\right) + 2(n-1)\left(-\tfrac{\deltanH}{q}\right) \\&\qquad\qquad + \frac{(n-4)(n-1)}{2}\left(-2\deltanH + \tfrac{\deltanH^2}{q} -2\deltanH^2\right) \\
&\qquad \qquad + (n-1)\left(- \tfrac{2}{n} + \tfrac{2}{nq}\right)\\
&= o_n(1).
\end{align*}
Hence $\chi^2(\bbP,\bbQ) = \e_{\bG \sim \bbQ}L(\bG)^{2} - 1 = o_n(1)$, and the conclusion follows.
\end{proof}

To lower bound the total variation distance in the case $p = o(n^{-1/2})$, we use a hypothesis test almost identical to the one used for matchings.

\begin{lem}
If $\bbH$ is the uniform distribution over spanning trees of $K_n$ and $p = o(n^{-1/2})$ then $\lim_{n \to \infty} \dtv(\bbP,\bbQ) = 1$.
\end{lem}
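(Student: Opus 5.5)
The test will be the same degree-variance statistic as in the proof of \Cref{lem:test-match}. Fix a near-regular tournament $P_1,\ldots,P_n$ of $K_n$. Set $\bX_i=\sum_{e\in P_i}\Ind{e\in\bG}$ and $\bY=\frac1n\sum_i(\bX_i-|P_i|q)^2$, and choose $\bbQ$ if and only if $\bY>\e_\bbQ\bY-\frac18$. Under $\bbQ$ nothing changes from the matching case: the $\bX_i$ are independent $\Bin(|P_i|,q)$. So $\e_\bbQ\bY=\frac{n-1}{2}q(1-q)$, and Chebyshev gives fluctuations $O((nq+1)/\sqrt n)=o_n(1)$.

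Under $\bbP$ I will condition on the planted tree $H$ and write $a_i=|P_i\cap E(H)|$, so that $\sum_i a_i=n-1$. Given $H$, the $\bX_i$ are independent with $\bX_i=a_i+\Bin(|P_i|-a_i,p)$. As in the matching computation, and using $|P_i|(q-p)=1+o_n(1)$ (because $\deltanH=2/n$),
\[
\e_\bbP[\bY\mid H]=\frac1n\sum_i\Bigl((|P_i|-a_i)p(1-p)+\bigl(a_i(1-p)-|P_i|(q-p)\bigr)^2\Bigr)
=\frac{n-1}{2}p(1-p)+\frac1n\sum_i(a_i-1)^2+o_n(1).
\]
This step needs $p\cdot\frac1n\sum a_i^2=o(1)$, which follows from the bound on $\max_i a_i$ below. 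Since $\frac{n-1}{2}(q(1-q)-p(1-p))=1+o_n(1)$, the two means differ by $\frac1n\sum_i(a_i-1)^2-1+o_n(1)$. Concentration of $\bY$ given $H$ follows as before from conditional independence, because each conditional variance is $O((np+a_i+1)^2)$. For that I will use that a uniform spanning tree of $K_n$ has maximum degree $O(\log n)$ with high probability, which follows from the Prüfer code: the degree of $i$ is $1+\Bin(n-2,1/n)$. Then $\frac1{n^2}\sum_i(np+a_i+1)^2=o_n(1)$.

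The main obstacle is showing that $S_H:=\frac1n\sum_i(a_i-1)^2$ concentrates around a constant strictly smaller than $1$; heuristically that constant is $3/4$. I expect the degree $D_i$ of a vertex in the uniform tree to be approximately $1+\Pois(1)$. Each tree edge at $i$ lies in $P_i$ rather than in its other endpoint's part according to the tournament, which is fixed and independent of the exchangeable tree. So $a_i$ should behave like $\Bin(D_i,\frac12)$, with mean $1$ and variance $\frac14\e D_i+\frac14\va D_i=\frac34$. To make this rigorous I will compute $\e S_H$ and $\va S_H$ directly, using exact formulas for joint edge-inclusion probabilities in a uniform spanning tree of $K_n$. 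These come from Cayley's formula and its forest generalization: a fixed forest with $k$ edges and component sizes $c_1,\dots,c_{n-k}$ is contained in $\prod_j c_j\cdot n^{\,n-k-2}$ of the $n^{n-2}$ trees. For instance, $\Pr[e\in\bH]=2/n$ and $\Pr[e,f\in\bH]=3/n^2$ for adjacent $e,f$.

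Expanding $(a_i-1)^2$ into indicator sums over $P_i$, together with $\sum_i|P_i|=\binom n2$, should give $\e S_H=\frac34+o_n(1)$. For the variance, products of indicators supported on disjoint vertex sets are nearly uncorrelated under the uniform spanning tree, since it is determinantal and the transfer current between disjoint edges of $K_n$ is $O(1/n)$. This will give $\va S_H=O(1/n)$.

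Combining the pieces, $\e_\bbP[\bY\mid H]-\e_\bbQ\bY\le-\frac14+o_n(1)$ with high probability over $H$. Together with the two concentration statements, the test errs with probability $o_n(1)$ under both hypotheses, so $\dtv(\bbP,\bbQ)\to1$.
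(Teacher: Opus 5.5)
Your proposal is correct and has the same skeleton as the paper's proof. It uses the same tournament statistic and the same analysis under $\bbQ$. It conditions on the planted tree, so the mean gap becomes $\frac1n\sum_i(a_i-1)^2-1+o_n(1)$, and finishes with a second-moment argument that $\frac1n\sum_i(a_i-1)^2\to\frac34$.

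The difference is how you get the moments of $a_i=|P_i\cap E(\bH)|$. The paper uses the Pr\"ufer code to get the joint law of the degrees $(\bD_i,\bD_j)$, which is shifted multinomial and hence asymptotically independent $1+\Pois(1)$. It then appeals to symmetry for how the tree edges at $i$ split between $P_i$ and the other parts. That passage from degrees to $\bB_i$, and the asymptotic independence of $(\bB_i,\bB_j)$, is only sketched there. Your heuristic of $a_i$ as roughly $\Bin(D_i,\frac12)$, with mean $1$ and variance $\frac34$, is the accurate description.

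Your forest-containment formula $\Pr[F\subset\bH]=n^{-|F|}\prod_j c_j$ treats $a_i$ directly as a sum of indicators over the fixed set $P_i$. It gives every mixed moment exactly and yields explicit rates $\e S_H=\frac34+O(1/n)$ and $\va S_H=O(1/n)$. The cost is bookkeeping over overlap patterns of up to four edges. Finishing by conditioning on a good event for $H$, rather than via the law of total variance, is equivalent.

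Two justifications need tightening.

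\textbf{Cross transfer currents.} Knowing only that these are $O(1/n)$ is not enough. With $e,f\in P_i$ and $g,h\in P_j$ there are $\Theta(n^4)$ quadruples, each with inclusion probability of order $n^{-4}$. An error of order $n^{-4}$ per quadruple would give only $\mathrm{Cov}((a_i-1)^2,(a_j-1)^2)=O(1)$, and hence no concentration. What makes your plan work is that the generic covariance vanishes exactly. In $K_n$ the transfer current between vertex-disjoint edges is $0$ by symmetry. Equivalently, directly from your formula, $\Pr[F_1\cup F_2\subset\bH]=\Pr[F_1\subset\bH]\Pr[F_2\subset\bH]$ for vertex-disjoint forests $F_1,F_2$. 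So only the $O(n^3)$ quadruples in which $e\cup f$ and $g\cup h$ share a vertex contribute; this includes those containing the edge $\{i,j\}$. Each contributes $O(n^{-4})$, so the covariance is $O(1/n)$ for $i\neq j$, as you claim.

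\textbf{Conditional variance of $\bY_i$.} Write $\bX_i-|P_i|q=\bZ_i+T_i$, where $\bZ_i$ is the centered $\Bin(|P_i|-a_i,p)$ part and $T_i=a_i(1-p)-|P_i|(q-p)$. The conditional variance of $\bY_i$ then contains the term $4T_i^2\va(\bZ_i)$, of order $np(a_i+1)^2$. This is not $O((np+a_i+1)^2)$ in general. Nonetheless $\frac1{n^2}\sum_i np(a_i+1)^2\le p\max_i(a_i+1)\cdot\frac1n\sum_i(a_i+1)=O(p\log n)=o_n(1)$, so your conclusion stands.
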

\begin{proof}
Just as in the proof of \Cref{lem:test-match}, let $P_1,\ldots,P_n$ be a partition of edges of $K_n$ corresponding to a regular or near-regular tournament.
Let $\bX_i = \sum_{e \in P_i} \Ind{e \in \bG}$, let $\bY_i = (\bX_i - \e\bX_i)^2$, and let $\bY = \frac{1}{n}\sum_{i=1}^n \bY_i$.
Just as before, our hypothesis test is to compute $\bY$ and output $\bbP$ if and only if $\bY < \e_{\bbQ}\bY - \frac{1}{8}$.

Exactly as in the proof of \Cref{lem:test-match}, $\e_\bbQ[\bY] = \frac{n-1}{2}q(1-q)$, and with high probability $|\bY - \e_{\bQ}\bY| = o_n(1)$ when $q,p\ll n^{-1/2}$.

Under $\bbP$, for convenience, we write $\bY_i = (\bZ_i + \bT_i)^2$, with $\bZ_i = \sum_{e \in P_i \setminus E(\bH)} (\Ind{e \in \bG} - p)$ and $\bT_i = |P_i \cap E(\bH)|(1-p) + |P_i|(p-q)$. 
Conditioned on $\bH = H$, the $\bT_i$ are fixed constants, and each $\bZ_i$ is independent and mean zero.
It follows that
\begin{align*}
\e_{\bbP} \left[ \bY_i \mid H\right] 
&= \e_{\bbP}[\bZ_i^2\mid H] + T_i^2\\
&= |P_i\setminus E(H)|p(1-p) + (|P_i \cap E(H)|(1-p)- |P_i|(q-p))^2\\
&= |P_i \setminus E(H)|p(1-p) + (|P_i \cap E(H)|(1-p) - 1 + o_n(1))^2,\\
\intertext{where we used that $|P_i|(q-p) = 1 + o_{n}(1)$, and that}
\va_{\bbP}\left[\bY_i \mid H\right]
&\le 2\va_{\bbP} [\bZ_i^2 \mid H] + 2 \va_{\bbP} [2\bT_i\bZ_i \mid H]\\
&= O(np+1)^2 + O(np + 1)\cdot (|P_i \cap E(H)|(1-p) - 1 + o_n(1))^2,
\end{align*}
and finally that 
$\mathrm{Cov}_\bbP(\bY_i,\bY_j \mid H) = 0$ for $i \neq j$. 
From this, it follows that conditioned on~$H$,
\begin{align*}
\e_{\bbP}[\bY \mid H]
&= \frac{(n-2)(n-1)}{2n}p(1-p) + \e_{i \sim \mathrm{Unif}([n])}(|P_i \cap E(H)|(1-p) - 1 + o_n(1))^2\\
\va_{\bbP}(\bY \mid H),
&= \frac{O(np+1)^2}{n} +\frac{O(np+1)}{n}\e_{i \sim \mathrm{Unif}([n])}(|P_i \cap E(H)|(1-p) - 1 + o_n(1))^2).
\end{align*}
We now invoke properties of the degree distribution of $\bbH$. 
Using the Pr\"{u}fer code method for sampling from $\bbH$, the degrees $\bD_i,\bD_j$ of vertices $i,j$ in $H$ are jointly distributed as shifted multinomial, so that $(\bD_i -1,\bD_j - 1)$ are sampled by performing $n-2$ trials with probability $1/n$ of increasing $\bD_i-1,\bD_j-1$ respectively.
Hence any $O(1)$ joint moments of $\bD_i,\bD_j$ are, up to $(1\pm o_n(1))$ factors, equal to the moments of independent $\Pois(1)$ random variables.
Let $\bB_i = |P_i \cap E(\bH)|$; by symmetry, each edge of $E(H)$ incident on $i$ is equally likely to be included in $P_i$ as its other endpoint, and any $O(1)$ joint moments of $\bB_i,\bB_j$ are equal to the joint moments of independent $\Pois(\frac{1}{2})$ random variables up to $(1\pm o_n(1))$ factors.
Hence 
\[
\e_{\bbH} (|P_i \cap E(\bH)|(1-p) - 1 + o_n(1))^2 = \frac{3}{4} + o_n(1),
\]
and using our formula for the conditional expectation above,
\[
\e_{\bbP}\bY = \e_{\bbH}\e_{\bbP}[\bY \mid \bH] = \frac{n}{2}p(1-p) + \tfrac{3}{4} +o_n(1).
\]
We bound the variance of $\bY$ using the law of total variance:
\begin{align*}
\va_{\bbP}(\bY) &=
\e_{\bbH} \va_{\bbP}(\bY\mid\bH) + \va_{\bbH} \e_{\bbP}[\bY \mid \bH].
\end{align*}
Above, we have computed the conditional variance, and using that $\e[\bB_i^2] = O(1)$, when $p \ll n^{-1/2}$, 
we have that $\e_\bbH \va_{\bbP}(\bY \mid \bH) = o_n(1)$.
Finally, we upper bound the variance of the conditonal expectation. 
Using that the $\bB_i,\bB_j$ are asymptotically independent Poisson,
\begin{align*}
\va_{\bbH} \e_{\bbP}[\bY \mid \bH]
&= \e_{i,j\sim [n]} \e_{\bbH} ((\bB_i(1-p)-1)^2 - \frac{3}{4} + o_n(1)) ((\bB_j(1-p)-1)^2 - \frac{3}{4} + o_n(1))\\
&= o_n(1).
\end{align*}
So from Chebyshev's inequality, with high probability under $\bbP$, $|\bY - \e[\bY]| = o_n(1)$.

Finally, we compare the mean of $\bY$ under $\bbP$ and $\bbQ$, and we have that
\begin{align*}
\e_\bbP \bY - \e_{\bbQ}\bY
&= \frac{n}{2}(p(1-p) - q(1-q)) + \frac{3}{4} + o_n(1)\\
&= \frac{n}{2} \left(\frac{q-\delta_n}{1-\delta_n}\left(1-\frac{q-\delta_n}{1-\delta_n}\right) - q(1-q)\right) + \frac{3}{4} + o_n(1)\\
&= \frac{n}{2}\left(- \frac{2}{n} + o(\frac{1}{n})\right) + \frac{3}{4} + o_n(1)
= - \frac{1}{4} +o_n(1).
\end{align*}
Hence the hypothesis test succeeds with high probability under both $\bbP$ and $\bbQ$, from which the statement follows.
\end{proof}